\newtheorem{remark}{Remark}
\newtheorem{definition}{Definition}
\newtheorem{proposition}{Proposition}
\newtheorem{conjecture}{Conjecture}
\def\Dbar{\leavevmode\lower.6ex\hbox to 0pt{\hskip-.23ex \accent"16\hss}D}
\def\bR{{\mbox{\bf R}}}
\def\bZ{{\mbox{\bf Z}}}
\def\paf{{\mbox{\rm PAF}}}
\begin{document}

\title{Three new lengths for cyclic Legendre pairs} 

\author {N. A. Balonin}
\address{Saint-Petersburg State University of Aerospace 
Instrumentation, Saint-Petersburg, Russian Federation}
\email{korbendfs@mail.ru}
\author {D. {\v{Z}}. {\Dbar}okovi{\'c}}
\address{University of Waterloo, 
Department of Pure Mathematics and Institute for Quantum Computing,
Waterloo, Ontario, N2L 3G1, Canada}
\email{djokovic@uwaterloo.ca}


\date{}

\begin{abstract}
There are 20 odd integers $v$ less than 200 for which the existence of Legendre pairs of length $v$ is undecided. 
The smallest among them is $v=77$. We have constructed 
Legendre pairs of lengths 91, 93 and 123 reducing the number 
of undecided cases to 17.
\end{abstract}

\maketitle

\section{Introduction}

It is conjectured that the cyclic Legendre pairs of odd lengths $>1$ always exist. See the next section for the definition of 
the Legendre pairs and Legendre difference families (DF). It is known that they exist for odd lengths $v$ in the range $2<v<76$. 
The smallest unresolved case is $v=77$. 
According to \cite{FGS:2001}, there are four series of known cyclic Legendre pairs of odd length $v>1$ (the first three are infinite):

(i) $v$ is a prime number;

(ii) $2v+1$ is a power of a prime number;

(iii) $v+1$ is a power of 2;

(iv) $v=pq$, where $p$ and $q$ are prime numbers and $q-p=2$.

We refer to (i) as the {\em classical series} because the 
construction is based on the sequence of the classical Legendre symbols, see \cite{FGS:2001}. 
The case (ii) is the {\em Szekeres series} provided by the well known series of so called Szekeres difference sets (in fact they are difference families), see \cite{Szekeres:1969,JS:2017}.
The series (iii) is known as the {\em Galois series} 
\cite{Schroeder:1984} and (iv) is the {\em twin-prime series}, 
see e.g. \cite[Theorem 9.4]{MP:2013}. 

The series (iii) and (iv) as well as (i) for $v\equiv 3 \pmod{4}$ are obtained from the three well known series of difference sets having the  parameters $(v;(v-1)/2;(v-3)/4)$. We refer to such Legendre pairs as type 1 (see sec. \ref{sec:type}).

If we start with the list of odd integers $v$ in the range 
$76<v<200$ and remove those which satisfy at least one of the conditions (i)-(iv) above we obtain the list of 20 integers: 
\begin{eqnarray*}
&& 77, 85, 87, 91, 93, 115, 117, 123, 129, 133, 145, \\
&& 147, 159, 161, 169, 175, 177, 185, 187, 195.
\end{eqnarray*}
This is in fact the list of all cases with $v<200$ for which the question of existence of cyclic Legendre pairs is unresolved.

In the paper \cite[p. 80]{FGS:2001} the authors list 22 odd lengths $<200$ for which they assert that the existence question of cyclic Legendre pairs is unresolved. However, the lengths 121 and 171 should not have been included in that list since 
$2\cdot 121+1=243=3^5$ and $2\cdot 171+1=343=7^3$ are prime 
powers. (On the other hand according to 
\cite[sec. 4]{CKKP:2007} the number 57 should have been included.)

There are two other series of Legendre DFs in elementary abelian 
groups which include some cyclic cases. One of them appears in 
\cite{JSW:1973} and the other in \cite[Theorem 3.1]{Ding:JCD:2008}. However, while they provide new cyclic Legendre DFs they do not give new lengths $v$ in the cyclic case.

Our main result is in sec. \ref{sec:New} where we give the first examples of cyclic Legendre pairs of lengths 91, 93 and 123. Thereby we reduce to 17 the number of the undecided cases listed above. 

According to \cite{FGS:2001}, exhaustive computer searches for cyclic Legendre pairs where carried out for all odd $v<48$. In 
sec. \ref{sec:more} we consider the odd integers $v$ in the range $48<v<76$ and we list the new cyclic Legendre DFs of type 2 that we constructed. Only for $v=69$ and $v=75$ we failed to find any new pairs.

\section{Notation and definitions}

Let $G$ be a finite abelian group (written additively) and let 
$v$ denote its order. For any function $f:G\to\bR$ its periodic auto-correlation function, $\paf_f:G\to\bR$, is defined by the formula $\paf_f(s)=\sum_{x\in G} f(x)f(x+s)$. 
We refer to $s$ as the {\em shift variable}. 

\begin{definition} We say that an ordered pair of functions 
$(f,g)$ mapping $G\to\{+1,-1\}$ is a {\em Legendre pair} on 
$G$ if $\paf_f(s)+\paf_g(s)=-2$ for all nonzero shifts $s$.
(For $s=0$ we have $\paf_f(0)=\paf_g(0)=v$.)
\end{definition}

For any function $f:G\to\{+1,-1\}$ we set 
$G_f=\{x\in G: f(x)=-1\}$. 

\begin{proposition} 
An ordered pair of functions $(f,g):G\to\{+1,-1\}$ is a Legendre 
pair on $G$ if and only if $(G_f,G_g)$ is a difference family 
in $G$ with parameters $(v;k_1,k_2;\lambda)$ where 
$k_1=|G_f|$, $k_2=|G_g|$ and $\lambda=k_1+k_2-(v+1)/2$. 
In particular the existence of Legendre pairs on $G$ implies 
that $v$ must be odd. 
\end{proposition}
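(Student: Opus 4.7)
The plan is to pass from $\pm 1$-valued functions to their support sets via the relation $f(x)=1-2\,\mathbf{1}_{G_f}(x)$, where $\mathbf{1}_{G_f}$ is the indicator function of $G_f$, and then to expand $\paf_f$ in terms of the difference-counting function of $G_f$. Namely, for any subset $A\subseteq G$ and any shift $s\in G$, let
\[
\lambda_A(s)=\bigl|\{(a,b)\in A\times A : a-b=s\}\bigr|.
\]
This is the quantity that enters the definition of a difference family: $(G_f,G_g)$ has parameters $(v;k_1,k_2;\lambda)$ precisely when $\lambda_{G_f}(s)+\lambda_{G_g}(s)=\lambda$ for every nonzero $s$.

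First I would substitute $f(x)=1-2\,\mathbf{1}_{G_f}(x)$ into $\paf_f(s)=\sum_{x\in G}f(x)f(x+s)$ and expand. The cross terms sum to $-2k_1$ each (using $|G_f|=k_1$), and the product $\mathbf{1}_{G_f}(x)\mathbf{1}_{G_f}(x+s)$ sums to $\lambda_{G_f}(-s)$, which after reindexing equals $\lambda_{G_f}(s)$ when $s\neq 0$ (and $k_1$ when $s=0$, giving $\paf_f(0)=v$ as asserted). This yields, for all nonzero $s$,
\[
\paf_f(s)=v-4k_1+4\lambda_{G_f}(s),
\]
and the analogous formula for $g$.

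Adding the two identities, the Legendre-pair condition $\paf_f(s)+\paf_g(s)=-2$ becomes
\[
2v-4(k_1+k_2)+4\bigl(\lambda_{G_f}(s)+\lambda_{G_g}(s)\bigr)=-2,
\]
which rearranges to $\lambda_{G_f}(s)+\lambda_{G_g}(s)=k_1+k_2-(v+1)/2$ for every $s\neq 0$. Setting $\lambda:=k_1+k_2-(v+1)/2$ shows this is exactly the difference-family condition, and the two statements are equivalent since each step is reversible. For the last claim, if a Legendre pair exists then $\lambda$ is a nonnegative integer (a count of difference representations), so $(v+1)/2$ is an integer, forcing $v$ to be odd.

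There is no real obstacle here: the argument is a direct expansion, and the only small bookkeeping point to watch is the sign/reindexing when converting $\sum_x \mathbf{1}_{G_f}(x)\mathbf{1}_{G_f}(x+s)$ into the standard difference count $\lambda_{G_f}(s)$.
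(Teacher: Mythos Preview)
Your argument is correct: the substitution $f=1-2\,\mathbf{1}_{G_f}$ followed by expansion gives $\paf_f(s)=v-4k_1+4\lambda_{G_f}(s)$ for $s\neq 0$, and adding the analogous identity for $g$ converts the Legendre-pair condition into the constant-difference condition $\lambda_{G_f}(s)+\lambda_{G_g}(s)=k_1+k_2-(v+1)/2$, with every step reversible. The parity claim then follows because this common value is an integer count (for any nonzero $s$), forcing $(v+1)/2\in\bZ$. The only edge case you do not mention is $v=1$, where there are no nonzero shifts; but then $v$ is already odd, so nothing is lost.

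The paper, by contrast, does not carry out this computation at all: its proof consists entirely of invoking Theorems~3 and~4 of \cite{DK:CompMethods:2019}. So your approach is a genuinely different route in presentation---a short, self-contained derivation in place of a black-box citation. What the paper's route buys is brevity and a link to a more general framework already developed elsewhere; what your route buys is that a reader can verify the proposition without consulting the external reference, and it makes transparent exactly where the factor $(v+1)/2$ (and hence the parity restriction) comes from.
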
 

\begin{proof} This follows immediately from Theorems 3 and 4 of 
\cite{DK:CompMethods:2019}.
\end{proof}

\begin{remark}
It is customary to require that the length $v$ of a Legendre pair is $>1$. However, according to the above definition, if $G$ is a trivial group then any pair of functions $G\to\{+1,-1\}$ is
a Legendre pair. The condition in the definition holds by 
default (there are no nonzero shifts). 
\end{remark}

If we introduce the additional parameter $n=k_1+k_2-\lambda$ then we have $v=2n-1$. By using the well known equation 
$k_1(k_1-1)+k_2(k_2-1)=\lambda (v-1)$ one can easily show that 
$$
\{k_1,k_2\} \subseteq \{ \frac{v-1}{2}, \frac{v+1}{2} \}.
$$

In view of the above proposition, we shall refer to the difference families $(X,Y)$ in $G$ having the parameters 
$(v;k_1,k_2;\lambda)$ with $v=2n-1$ as {\em Legendre DFs}.

We say that a Legendre pair on a group $G$ is {\em cyclic} if 
the group $G$ is cyclic. In this note we deal only with the 
cyclic Legendre pairs and we may assume that $G=\bZ_v$, the 
additive group of integers modulo $v$.

We give a simple example to introduce the notation that we will 
use in the rest of this note.

{\em Example:} $v=39$. In this example $v=13\cdot 3$ and so 
$\bZ_v^*$ (the group of units of the ring $\bZ_v$) is isomorphic 
to $\bZ_{12}\times\bZ_2$. Then $H=\{1,16,22\}$ is the unique subgroup of $\bZ_v^*$ of order 3. 
There exists an $H$-invariant Legendre DF with parameter set
$(39;19,19;18)$, namely
\begin{eqnarray*}
\quad \\
X &=& H\{0,1,2,3,4,12,14\} \\ 
Y &=& H\{0,2,3,4,8,14,19\} \\
\end{eqnarray*}
In general, if $H$ is a subgroup of $\bZ_v^*$ and $S$ a subset
of $\bZ_v$ then the product $HS$ is defined to be 
$HS=\{hs \pmod{v}: h\in H,~ s\in S\}$. Note that 
$H\{0\}=\{0\}$.

\section{Cyclic Legendre pairs of new lengths $v=91,93,123$} 
\label{sec:New}

We have constructed four pairwise nonequivalent Legendre DFs 
$(X_i,Y_i)$ of length $91$. For the definition of equivalence see sec. \ref{sec:equiv}. Only one DF is constructed 
for each of the lengths $93$ and $123$. Instead of Legendre pairs, we list the corresponding difference families. 
In each case, each block is a union of orbits of a fixed subgroup ($H$, $H_1$ or $H_2$) of order 3 or 5 of $\bZ_v^*$.

\centerline{$v=91$}

Four pairwise nonequivalent Legendre DFs:
\begin{eqnarray*}               
&& (91;45,45;44) \quad H_1=\{1,16,74\},~ H_2=\{1,9,81\} \\
X_1 &=& H_1\{1,2,7,14,15,17,19,22,25,28,38,43,44,50,55\} \\
Y_1 &=& H_1\{2,3,10,11,14,17,20,22,28,43,44,45,49,50,55\} \\
X_2 &=& H_1\{1,4,5,8,9,11,15,22,27,28,34,38,43,49,50\} \\
Y_2 &=& H_1\{8,9,10,11,14,17,22,25,28,33,34,38,44,50,55\} \\
X_3 &=& H_1\{2,3,5,9,10,14,15,20,27,28,33,34,38,50,55\} \\
Y_3 &=& H_1\{3,4,11,14,19,25,27,28,33,34,43,44,45,50,55\} \\
X_4 &=& H_2\{2,5,14,16,19,20,23,24,29,30,37,40,46,48,49\} \\
Y_4 &=& H_2\{2,4,6,8,13,14,16,23,30,37,38,39,40,46,49\} \\
\end{eqnarray*}

\centerline{$v=93$}

Only one Legendre DF:
\begin{eqnarray*}
&& (93;46,46;45) \quad  H=\{1,25,67\} \\
X &=& H\{0,1,2,3,5,8,10,12,13,16,22,24,43,44,47,48\} \\
Y &=& H\{0,1,3,4,5,9,11,12,18,20,22,37,40,43,44,51\} \\
\end{eqnarray*}

\centerline{$v=123$}

Only one Legendre DF:
\begin{eqnarray*}
&& (123;61,61;60) \quad H=\{1,10,16,37,100\} \\
X &=& H\{0,1,3,6,11,13,28,29,33,35,43,45,59\} \\
Y &=& H\{4,5,6,11,14,15,18,19,22,28,33,41,45\} \\
\end{eqnarray*}

\section{Type 1 and type 2} \label{sec:type}

Let $(X,Y)$ be a Legendre DF in $\bZ_v$. It is easy to see that if $X$ is a difference set then $Y$ must be a difference set too. In that case we say that $(X,Y)$ (and its corresponding 
Legendre pair) is of {\em type $1$}, and otherwise that it is of 
{\em type $2$}. The Legendre pairs in the Galois and the 
twin-prime series as well as those in the classical series with 
$v$ a prime number $\equiv 3 \pmod{4}$ are of type 1. Note that two equivalent Legendre DFs must have the same type. Hence the study of type 1 Legendre DFs essentially reduces to the study of difference sets. For that reason we shall consider only the  
Legendre DFs of type 2.

As mentioned earlier, it is conjectured that cyclic Legendre DFs exist for all odd lengths $v>2$. We propose a bit stronger 
version.

\begin{conjecture} \label{cj:type2}
Legendre DFs of type $2$ exist for all odd lengths $v>8$.
\end{conjecture}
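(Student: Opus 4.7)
The conjecture is open; we outline a plausible two-pronged attack.

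The first prong is computer verification in a finite initial range. Continuing the approach of Sections \ref{sec:New} and \ref{sec:more}, for each odd $v$ from $9$ up to some bound $V_0$ one enumerates the small-order subgroups $H\le\bZ_v^*$ and searches $H$-orbit by $H$-orbit for pairs $(X,Y)$ satisfying the Legendre DF parameter set and the PAF condition, then checks directly that the resulting blocks are not difference sets. The results of this paper already handle $v\in\{91,93,123\}$, and presumably $V_0$ can be pushed considerably past $200$; this step is essentially compute time.

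The second prong is the infinite range. The easiest cases already live inside series (i): for every prime $v\equiv 1\pmod 4$, the difference-set equation $k(k-1)=\lambda(v-1)$ with $k=(v-1)/2$ forces $\lambda=(v-3)/4$, which is non-integral, so no $(v,(v-1)/2,\lambda)$-difference set exists and the classical Legendre pair (built from the quadratic-residue set) is automatically of type $2$. This gives an infinite type $2$ subfamily for free. The Szekeres series (ii) and the cyclic specializations in \cite{JSW:1973,Ding:JCD:2008} should admit an analogous direct parameter check: one computes $\lambda$ from the construction and, for $v$ in the appropriate residue class, either $\lambda$ fails to be an integer or the relevant difference set is excluded by a short character-sum argument, forcing type $2$.

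The genuinely hard case, and the main obstacle, is the set of lengths for which the only currently known construction gives type $1$: the Galois series (iii), the twin-prime series (iv), and primes $v\equiv 3\pmod 4$. The natural strategy is controlled perturbation: start from a known $(v,(v-1)/2,(v-3)/4)$-difference set pair $(X,Y)$ and seek small symmetric differences $X'=X\triangle\{a,b\}$ and $Y'=Y\triangle\{c,d\}$ such that the associated $\pm 1$ functions still satisfy the PAF condition of the definition while $X'$ ceases to be a difference set. The PAF condition linearizes in the four swap indicators and reduces to a short system of equations, which one expects to be generically solvable once $v$ is large enough that $X$ has many movable pairs. Proving that such a perturbation provably exists for \emph{every} sufficiently large $v$ in each problematic residue class---rather than only for the cases one can test by machine---is where the real difficulty lies; fallback options include building type $2$ DFs by composition from smaller Legendre DFs, or extending the subgroup-invariant search of Section \ref{sec:New} along an infinite arithmetic progression of $v$.
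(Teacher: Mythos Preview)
The statement is a \emph{conjecture}; the paper does not prove it and does not claim to. What the paper actually does is verify the conjecture for $v<76$: it quotes \cite{FGS:2001} for $v<48$, observes (as you do) that for primes $v\equiv 1\pmod{4}$ the classical Legendre pair is automatically of type~2, checks case by case that the Szekeres pairs with $4<v<76$ are of type~2, and then supplies explicit type~2 examples for the six leftover lengths $v=49,55,57,59,67,71$ in Section~\ref{sec:more}. There is no argument in the paper beyond this finite range.

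Your write-up correctly flags the conjecture as open and proposes a research programme rather than a proof, so in that sense there is no gap to name: you and the paper agree that no proof exists. Your ``first prong'' is exactly the paper's method, and your observation about primes $v\equiv 1\pmod 4$ matches the paper verbatim. Two points where your outline is looser than you suggest: (a) the paper explicitly says it does \emph{not} know whether all Szekeres pairs of length $v>4$ are of type~2, so your ``should admit an analogous direct parameter check'' is optimistic; (b) your perturbation idea does not quite linearize as claimed, since flipping $f$ at $a$ and at $b$ produces a quadratic cross-term in $\paf_f(s)$ whenever $b-a=\pm s$, and the block sizes must stay in $\{(v-1)/2,(v+1)/2\}$, which constrains which swaps are admissible. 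These are not fatal to a heuristic outline, but they are exactly the places where turning the programme into a proof would require real new ideas.
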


One of the objectives of this note is to verify this conjecture 
for $v<76$. It follows from \cite{FGS:2001} that the conjecture 
is true for $v<48$. If $v$ is a prime number $\equiv 1 \pmod{4}$ then the classical Legendre pair of length $v$ is of type 2. One 
can verify that the Legendre pairs in the Szekeres series having 
length $v$ in the interval $4<v<76$ are of type 2. Thus, in 
order to verify the above conjecture for $v<76$ it suffices 
to verify it in the cases $v=49,55,57,59,67,71$. 
This will be done in the next section.

We do not know whether all Legendre pairs of length $v>4$ in the 
Szekeres series are of type 2.

\section{New Legendre DFs of type 2} \label{sec:more}

In this section we list the cyclic Legendre DFs of type 2 and length $v>48$ that we have constructed. We imposed the restriction $v>48$ because for $v<48$ exhaustive searches 
have been carried out \cite{FGS:2001}.

\centerline{$v=49$} 

The Legendre DF below is not equivalent to the one in \cite[p. 85]{FGS:2001}.
\begin{eqnarray*}
&& (49;24,24;23) \quad H=\{1,18,30\} \\
X &=& H\{1,2,8,9,13,24,26,29\} \\
Y &=& H\{2,3,4,6,7,8,12,37\} \\
\end{eqnarray*}

\centerline{$v=51$} 

The two Legendre DFs below together with the one in 
\cite[p. 85]{FGS:2001} and another one from the Szekeres series are pairwise nonequivalent.
\begin{eqnarray*}
&& (51;25,25;24) \quad H=\{1,16\} \\
X_1 &=& H\{0,1,2,4,6,8,19,24,25,28,35,38,41\} \\
Y_1 &=& H\{0,2,4,5,9,14,15,18,21,22,25,31,35\} \\
X_2 &=& H\{1,2,9,11,17,18,19,21,24,25,28,38,41\} \\
Y_2 &=& H\{1,3,4,5,8,9,15,17,18,19,21,22,31\} \\
\end{eqnarray*}

\centerline{$v=53$} 

All ten Legendre DFs listed below are pairwise nonequivalent.
The first five are known: the first belongs to the classical series, the second is from \cite{JSW:1973}, the third from \cite{Ding:JCD:2008}, the fourth from the Szekeres series, and the fifth from \cite[p. 85]{FGS:2001}. We have constructed many Legendre DFs for $v=53$ but we recorded only five of them (the last five in the list below). 

\begin{eqnarray*}
&& (53;26,26;25) \quad 
H=\{1,10,13,15,16,24,28,36,42,44,46,47,49\} \\
X_1 &=& H\{1,4\}, \quad Y_1 = H\{2,5\} \\
X_2 &=& H\{1,2\}, \quad Y_2 = H\{1,5\} \\
X_3 &=& H\{1,2\}, \quad Y_3 = H\{2,5\} \\
X_4 &=& \{1,4,5,6,8,14,16,17,19,21,22,23,26,28,29,33,35,38, \\
&& \quad\quad  40,41,42,43,44,46,50,51\} \\
Y_4 &=& \{4,8,9,10,12,14,15,18,19,21,22,23,24,29,30,31,32, \\
&& \quad\quad  34,35,38,39,41,43,44,45,49\} \\
X_5 &=& \{5,7,12,13,15,18,19,24,26,28,30,33,35,36,37,38,39, \\
&& \quad\quad   42,43,44,46,47,48,50,51,52\} \\
Y_5 &=& \{4,7,8,10,11,14,15,20,21,23,24,25,26,29,30,32,37, \\
&& \quad\quad  40,42,44,47,48,49,50,51,52\} \\
X_6 &=& \{0,1,2,3,5,9,10,11,12,14,17,24,25,26,28,29,34,35,40, \\
&&        44,45,46,47,48,50,51\} \\
Y_6 &=& \{0,2,4,6,7,8,12,14,17,19,20,21,22,24,27,28,30,31,34, \\
&&        35,40,44,46,48,49,52\} \\
X_7 &=& \{0,2,3,4,7,10,11,12,13,16,17,18,21,23,32,33,37,38, \\
&&        39,40,41,42,45,49,50,52\} \\
Y_7 &=& \{0,1,3,6,7,12,13,15,20,21,23,24,25,31,33,35,37,38, \\
&&        40,42,44,46,47,48,50,51\} \\
X_8 &=& \{0,1,2,6,7,9,12,13,14,15,18,19,20,24,34,36,37,38,39, \\
&&        41,43,45,46,49,51,52\} \\
Y_8 &=& \{0,1,5,8,9,11,12,15,16,20,21,23,24,25,31,33,35,38, \\
&&        40,41,43,44,47,49,51,52\} \\
X_9 &=& \{0,6,8,9,10,12,14,15,23,26,28,30,31,32,33,36,37,38, \\
&&        40,41,42,43,48,49,50,52\} \\
Y_9 &=& \{0,1,2,3,6,8,10,11,14,15,16,18,20,23,29,31,34,35,38, \\
&&        39,41,42,45,47,48,52\} \\
X_{10} &=& \{0,1,6,7,8,13,14,15,17,18,19,21,22,23,24,26,28, \\
&&          32,37,38,40,46,48,49,50,51\} \\
Y_{10} &=& \{0,2,6,7,10,13,14,15,16,18,19,21,22,25,26,30,31, \\
&&          32,33,34,36,40,43,46,48,50\} \\
\end{eqnarray*}

\centerline{$v=55$} 

The Legendre DF below is not equivalent to the one listed in \cite[p. 85]{FGS:2001}.
\begin{eqnarray*}
&& (55;27,27;26) \quad H=\{1,34\} \\
X &=& H\{1,2,6,7,8,9,10,11,15,16,21,24,27,37,50\} \\
Y &=& H\{1,2,3,8,16,17,19,20,21,25,27,29,37,40,42\} \\
\end{eqnarray*}

\centerline{$v=57$} 

In this case only two nonequivalent Legendre DFs are known.
The first one was constructed in 2007 \cite{CKKP:2007} and the 
second one constructed very recently in \cite[Section 2.4]{ABH:2020}. We have constructed the six Legendre DFs below. The first five of them, together with the two known DFs, are pairwise nonequivalent. The sixth is equivalent to the one constructed in \cite{ABH:2020}. 
\begin{eqnarray*}               
&& (57;28,28;27) \quad H=\{1,7,49\} \\
X_1 &=& H\{0,2,3,4,8,16,23,24,30,31\} \\
Y_1 &=& H\{0,2,3,4,6,8,16,23,24,29\} \\
X_2 &=& H\{0,2,8,10,12,23,24,29,30,31\} \\
Y_2 &=& H\{0,3,4,5,6,22,23,24,29,31\} \\
X_3 &=& H\{0,1,3,5,6,10,16,23,29,30\} \\
Y_3 &=& H\{0,1,2,3,15,16,22,24,29,31\} \\
X_4 &=& H\{0,1,2,4,6,11,15,29,30,31\} \\
Y_4 &=& H\{0,8,11,12,22,23,24,29,30,31\} \\
X_5 &=& H\{0,2,3,4,6,10,15,16,29,31\} \\
Y_5 &=& H\{0,1,3,8,10,15,16,23,24,29\} \\
X_6 &=& H\{0,2,3,4,5,11,15,16,22,30\} \\
Y_6 &=& H\{0,1,2,4,15,16,22,23,24,30\} \\
\end{eqnarray*}

\centerline{$v=59$} 

First examples of Legendre DFs of length 59 and type 2:
\begin{eqnarray*}
&& (59;29,29;28) \\
X_1 &=& \{0,1,2,3,4,5,6,10,12,13,15,16,19,20,21,25,27,30,31,33,
37,38,39,41, \\
&& 43,44,45,52,56\} \\
Y_1 &=& \{0,1,3,4,5,6,7,10,12,13,14,15,17,18,20,23,26,27,28,30,
34,35,36,39, \\
&& 43,45,48,50,55\} \\
X_2 &=& \{0,1,2,3,4,5,7,8,9,10,12,15,16,17,22,24,25,26,28,29,33,
34,38,39,42, \\
&& 44,48,50,53\} \\
Y_2 &=& \{0,2,3,4,6,7,9,10,12,14,15,18,19,21,23,25,29,30,31,32,
33,36,38,39, \\
&& 43,46,49,50,51\} \\
X_3 &=& \{0,1,2,3,4,5,7,8,9,10,14,16,19,20,21,24,27,28,30,32,
36,37,38,41, \\
&& 45,47,48,51,54\} \\
Y_3 &=& \{0,2,3,4,5,6,8,9,10,12,14,16,17,19,24,25,26,28,29,31,
32,34,39, \\
&& 42,43,44,50,54,55\} \\
X_4 &=& \{0,1,2,3,4,6,9,11,12,13,15,16,18,19,20,23,24,29,30,32,
36,37,38,40, \\
&& 41,46,47,49,56\} \\
Y_4 &=& \{0,1,2,4,6,7,9,10,11,13,14,15,17,21,22,25,26,28,30,31,
33,35,36,41, \\
&& 43,45,46,47,53\} \\
X_5 &=& \{0,1,2,3,4,6,10,12,13,14,16,18,19,21,23,26,27,28,29,31,
32,36,37,40, \\
&& 42,43,44,49,51\} \\
Y_5 &=& \{0,1,2,4,5,7,8,10,11,13,14,17,18,20,21,22,25,26,31,32,
36,37,38,40, \\
&& 42,44,45,47,52\} \\
X_6 &=& \{0,1,2,3,5,6,7,8,9,11,13,15,17,18,20,24,27,28,29,31,32,
38,40,41,44, \\
&& 45,46,54,55\} \\
Y_6 &=& \{0,1,2,3,6,7,9,10,12,13,15,17,19,20,21,24,25,27,31,32,
33,36,40,41, \\
&& 43,44,46,49,51\} \\
\end{eqnarray*}

\centerline{$v=61$} 

In this case apart from the classical Legendre DF there is another one provided by a lemma of J. Seberry Wallis \cite{JSW:1973}, see also \cite[Lemma 2]{Djokovic:Facta:1997}.
The Legendre DF below is not equivalent to any of them.
\begin{eqnarray*}
&& (61;30,30;29) \quad H=\{1,9,20,34,58\} \\
X &=& H\{2,3,4,5,12,26\} \\
Y &=& H\{3,4,5,10,12,13\} \\
\end{eqnarray*}

\centerline{$v=63$}

The six new Legendre DFs below and the one from the Szekeres  series are all pairwise nonequivalent.
\begin{eqnarray*}
&& (63;31,31;30) \quad H_1=\{1,4,16\},~ H_2=\{1,25,58\},~ 
                       H_3=\{1,8,11,23,25,58\} \\
X_1 &=& H_1\{2,3,6,10,11,22,23,30,31,42,47\} \\
Y_1 &=& H_1\{1,2,7,9,10,11,14,15,21,31,47\} \\
X_2 &=& H_1\{1,3,9,13,14,15,21,22,23,30,47\} \\
Y_2 &=& H_1\{3,5,7,9,10,11,15,21,22,23,30\} \\
X_3 &=& H_1\{1,2,3,6,9,11,14,21,22,30,31\} \\
Y_3 &=& H_1\{1,2,3,6,7,9,11,15,22,42,47\} \\
X_4 &=& H_1\{2,3,7,9,10,14,15,21,26,30,43\} \\
Y_4 &=& H_1\{3,6,7,10,11,14,21,27,30,43,47\} \\
X_5 &=& H_2\{1,3,6,7,15,17,20,27,29,40,42\} \\
Y_5 &=& H_2\{3,5,7,8,10,15,17,21,27,30,40\} \\
X_6 &=& H_3\{0,2,9,10,15,19,27\} \\
Y_6 &=& H_3\{0,2,5,7,9,15,27\} \\
\end{eqnarray*}

\centerline{$v=65$}

The Legendre DF below is not equivalent to the one in the   Szekeres series.
\begin{eqnarray*}
&& (65;32,32;31) \quad H=\{1,16,61\} \\
X &=& H\{1,5,6,9,18,20,22,23,24,26,35,52\} \\
Y &=& H\{0,1,3,7,11,13,19,22,23,24,36,50\} \\
\end{eqnarray*}

\centerline{$v=67$}

The following Legendre DF gives the first example of Legendre pairs of length 67 and type 2:
\begin{eqnarray*}
&& (67;33,33;32) \quad H=\{1,29,37\} \\
X &=& H\{1,3,5,6,10,16,17,30,34,41,53\} \\
Y &=& H\{2,4,6,9,12,15,16,18,25,32,41\} \\
\end{eqnarray*}

\centerline{$v=71$}

We give the first example of a Legendre DF of length 71 and of 
type 2:
\begin{eqnarray*}
&& (71;35,35;34) \quad H=\{1,5,25,54,57\} \\
X &=& H\{1,2,3,6,11,14,27\} \\
Y &=& H\{1,2,3,9,14,18,42\} \\
\end{eqnarray*}

\centerline{$v=73$}

The Legendre DF below is not equivalent to the one in the classical series.
\begin{eqnarray*}
&& (73;36,36;35) \quad H=\{1,8,64\} \\
X &=& H\{2,5,6,7,9,11,12,17,18,26,35,42\} \\
Y &=& H\{1,2,3,7,9,13,18,21,26,33,35,42\} \\
\end{eqnarray*}

\centerline{$v=111$}

The Legendre DF below is not equivalent to the one belonging to  the Szekeres series.
\begin{eqnarray*}
&& (111;55,55;54) \quad H=\{1,10,100\} \\
X &=& H\{0,1,2,3,4,7,8,9,13,16,21,22,27,41,42,44,54,62,63\} \\
Y &=& H\{0,1,3,4,5,6,7,8,11,16,17,21,26,27,52,53,55,63,64\} \\
\end{eqnarray*}

\centerline{$v=121$}

Note that $2v+1=243=3^5$ is a prime power $\equiv 3 \pmod{4}$. 
We list below two Legendre DFs $(X_i,Y_i)$, $i=1,2$. The first 
one is equivalent to the DF in the Szekeres series. 
The block $X_1$ is skew and $Y_1$ is symmetric. 
We have constructed the second Legendre DF $(X_2,Y_2)$ with 
$X_2=X_1$ and verified that the two DFs are nonequivalent. Although $Y_2$ is not symmetric, the second pair still qualifies as a Szekeres difference set according to 
\cite[Definition 5.6]{JS:2017}) 
\begin{eqnarray*}
&& (121;60,60;59) \quad H=\{1,3,9,27,81\} \\
X_1 &=& H\{4,10,11,20,25,26,34,35,38,40,67,76\} \\
Y_1 &=& H\{1,7,8,10,16,20,26,31,35,38,61,94\} \\
X_2 &=& X_1 \\
Y_2 &=& H\{1,4,5,8,11,13,17,20,22,26,34,76\} \\
\end{eqnarray*}

\section{Equivalence of Legendre pairs} \label{sec:equiv}

To define the equivalence, we need first to define the 
elementary transformations on the set of Legendre pairs 
on a given finite abelian group $G$ of odd order $v$. 
(We assume that $G$ is written additively.) If $f$ is a 
function $G\to \{+1,-1\}$ and $s\in G$ then we say that 
the function $G\to \{+1,-1\}$ sending $x\to f(s+x)$ is 
the {\em translate} of $f$ by $s$.

The {\em elementary transformations} of a Legendre pair 
$(f,g)$ are the following:

$(i)$ interchange $f$ and $g$;

$(ii)$ replace $f$ by $-f$;

$(iii)$ replace $f$ by its translate by $s\in G$;

$(iv)$ replace $f$ by $f\circ\iota$, where $\iota$ is the 
automorphism of $G$ sending each $x\in G$ to its inverse 
$-x$;

$(v)$
 replace $(f,g)$ by $(f\circ\alpha,g\circ\alpha)$ where 
$\alpha$ is an automorphism of $G$.

\begin{definition}
We say that two Legendre pairs on $G$ are {\em equivalent} 
if one can be transformed to the other by performing a finite 
sequence of elementary transformations.
\end{definition}

The effect on $(G_f,G_g)$ of the above elementary transformations is as follows:

$(i)'$ interchange $G_f$ and $G_g$;

$(ii)'$ replace $G_f$ by $G\setminus G_f$;

$(iii)'$ replace $G_f$ by the translate $G_f -s$;

$(iv)'$ replace $G_f$ by $-G_f$;

$(v)'$ replace $(G_f,G_g)$ by 
$(\alpha^{-1}(G_f),\alpha^{-1}(G_g))$.

We define the equivalence of Legendre DFs on $G$ by using the 
$(i)'-(v)'$ as elementary transformations of pairs $(G_f,G_g)$. 
Then two Legendre pairs are equivalent if and only if 
their Legendre DFs are equivalent. We remark that because of $(ii)'$, two equivalent Legendre DFs may have different parameter sets.

\section{Acknowledgements} \label{sec:acknowledgements}

The research of the first author leading to these results has received financial support of the Ministry of Science and Higher 
Education of the Russian Federation, agreement 
No. FSRF-2020-0004. The research of the second author was enabled in part by support provided by SHARCNET (http://www.sharcnet.ca) and Compute Canada (http://www.computecanada.ca).

\end{document}